\theoremstyle{plain}
\newtheorem{theorem}{Theorem}[section]
\newtheorem{lemma}[theorem]{Lemma}
\newtheorem{proposition}[theorem]{Proposition}
\newtheorem{corollary}[theorem]{Corollary}
\newtheorem{problem}[theorem]{Problem}
\theoremstyle{remark}
\newtheorem{example}[theorem]{Example}
\newtheorem*{acknowledgment}{Acknowledgment}
\numberwithin{equation}{section}
\newcommand{\seclabel}[1]{\label{sec:#1}}   
\newcommand{\thmlabel}[1]{\label{thm:#1}}   
\newcommand{\lemlabel}[1]{\label{lem:#1}}   
\newcommand{\corlabel}[1]{\label{cor:#1}}   
\newcommand{\prplabel}[1]{\label{prp:#1}}   
\newcommand{\eqnlabel}[1]{\label{eqn:#1}}   
\newcommand{\thmref}[1]{\ref{thm:#1}}   
\newcommand{\lemref}[1]{\ref{lem:#1}}   
\newcommand{\corref}[1]{\ref{cor:#1}}   
\newcommand{\prpref}[1]{\ref{prp:#1}}   
\newcommand{\eqnref}[1]{\eqref{eqn:#1}} 
\newcommand{\setof}[2]{\{#1\,|\,#2\}}   
\newcommand{\inv}{^{-1}}                        
\newcommand{\sbl}[1]{\langle#1\rangle}      
\title{Half-isomorphisms of Moufang Loops}
\author[M. Kinyon]{Michael Kinyon}
\author[I. Stuhl]{Izabella Stuhl}
\author[P. Vojt\v{e}chovsk\'{y}]{Petr Vojt\v{e}chovsk\'{y}}
\address{Department of Mathematics \\
University of Denver \\
Denver, CO 80208 USA}
\email[Kinyon]{\url{mkinyon@du.edu}}
\email[Stuhl]{\url{izabella.stuhl@du.edu}}
\email[Vojt\v{e}chovsk\'{y}]{\url{petr@math.du.edu}}
\date{\today}
\subjclass[2000]{20N05}
\keywords{Moufang loop, half-isomorphism}
\begin{document}

\begin{abstract}
We prove that if the squaring map in the factor loop of a Moufang loop $Q$ over its nucleus is surjective, then every half-isomorphism of $Q$ onto a Moufang loop is either an isomorphism or an anti-isomorphism. This generalizes all earlier results in this vein.
\end{abstract}

\maketitle

\section{Introduction}
\seclabel{intro}

A \emph{loop} $(Q,\cdot)$ is a set $Q$ with a binary operation $\cdot$ such that for each $a$, $b\in Q$, the equations $a\cdot x=b$ and $y\cdot a=b$ have unique solutions $x$, $y\in Q$, and there exists a neutral element $1\in Q$ such that $1\cdot x = x\cdot 1 = x$ for all $x\in Q$. We will often write $xy$ instead of $x\cdot y$ and use $\cdot$ to indicate priority of multiplications. For instance, $xy\cdot z$ stands for $(x\cdot y)\cdot z$.

A \emph{Moufang loop} is a loop satisfying any (and hence all) of the Moufang identities
\[
xy\cdot zx = x(yz\cdot x),\qquad (xy\cdot x)z = x(y\cdot xz),\qquad (zx\cdot y)x = z(x\cdot yx)\,.
\]
In this paper we will only need the first Moufang identity, namely
\begin{equation}\eqnlabel{Moufang}
    xy\cdot zx = x(yz\cdot x)\,.
\end{equation}
Basic references to loop theory in general and Moufang loops in particular are \cite{Bruck, Pflugfelder}.

A loop is \emph{diassociative} if every subloop generated by two elements is associative (hence a group). By Moufang's theorem \cite{Moufang}, if three elements of a Moufang loop associate in some order, then they generate a subgroup. In particular, every Moufang loop is diassociative. We will drop further unnecessary parentheses while working with diassociative loops, for instance in the expression $xyx$.

If $Q$, $Q'$ are loops, a mapping $\varphi : Q\to Q'$ is a \emph{half-homomorphism} if, for every $x$, $y\in Q$, either $\varphi (xy) = \varphi x \cdot \varphi y$ or $\varphi (xy) = \varphi y\cdot \varphi x$. A bijective
half-homomorphism is a \emph{half-isomorphism}, and a \emph{half-automorphism} is defined as expected.

\medskip

The starting point for the investigation of half-isomorphisms of loops is the following result of Scott.

\begin{proposition}[\cite{Scott}, Theorem 1]\prplabel{scott}
Let $G$, $G'$ be groups. Every half-isomorphism of $G$ onto $G'$ is either an isomorphism or an anti-isomorphism.
\end{proposition}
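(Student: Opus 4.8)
The plan is to show that a half-isomorphism $\varphi\colon G\to G'$ which is not an anti-isomorphism must be an isomorphism. First I would record the routine normalizations. Applying the defining dichotomy to $\varphi(1\cdot 1)$ gives $\varphi(1)^2=\varphi(1)$, hence $\varphi(1)=1$; applying it to $\varphi(x x\inv)$ gives $\varphi(x\inv)=\varphi(x)\inv$; and since the powers of a fixed element commute, a short induction gives $\varphi(x^n)=\varphi(x)^n$ for every $n$. The observation that does the real work is elementary: if $\varphi(x)$ and $\varphi(y)$ commute then the two options in the definition of half-homomorphism coincide, so $\varphi(xy)=\varphi(x)\varphi(y)$ automatically; thus the only pairs that can cause trouble are those whose images fail to commute.

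Call a pair $(x,y)$ \emph{direct} if $\varphi(xy)=\varphi(x)\varphi(y)$ and \emph{twisted} if $\varphi(xy)=\varphi(y)\varphi(x)$ (a pair can be both, exactly when its images commute). Since $\varphi$ is not an anti-isomorphism there is a \emph{strictly} direct pair $(a,b)$, meaning $\varphi(ab)=\varphi(a)\varphi(b)$ while $\varphi(a)\varphi(b)\ne\varphi(b)\varphi(a)$. Suppose, toward a contradiction, that $\varphi$ is also not an isomorphism; then there is a \emph{strictly} twisted pair $(c,d)$, meaning $\varphi(cd)=\varphi(d)\varphi(c)$ while $\varphi(c)\varphi(d)\ne\varphi(d)\varphi(c)$. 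The aim is to show that no half-isomorphism can carry both such pairs.

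The engine is associativity in $G$: given any word in $G$ and any two bracketings of it, both bracketings name the same element, so $\varphi$ of that element lies in the intersection of the two sets of values produced by the respective bracketings. I would expand a carefully chosen word built from $a,b,c,d$ (and their inverses) in several ways, each time treating a subword whose $\varphi$-value is already known — such as $ab$, where $\varphi(ab)=\varphi(a)\varphi(b)$, or $cd$, where $\varphi(cd)=\varphi(d)\varphi(c)$ — as a single letter. Comparing such expansions and using the two prescribed noncommutativities to tell the candidate values apart pins $\varphi$ of the word down to a single value; feeding that value into one further expansion then forces $\varphi(a)\varphi(b)=\varphi(b)\varphi(a)$ or $\varphi(c)\varphi(d)=\varphi(d)\varphi(c)$, contradicting the strictness of $(a,b)$ or of $(c,d)$. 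Short products such as $\varphi(gab)$, $\varphi(acdb)$ and $\varphi(abcd)$, together with the words obtained by reversing them, are the natural places to run this.

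The main obstacle is purely combinatorial: each bracketing of an $n$-letter word offers two candidate outputs, so cases multiply quickly, and the delicate step is to verify that, beyond the single value genuinely common to the two expansions, every remaining coincidence among candidates either can be excluded using only $\varphi(a)\varphi(b)\ne\varphi(b)\varphi(a)$ and $\varphi(c)\varphi(d)\ne\varphi(d)\varphi(c)$ or itself already contradicts one of these. There is no slick identity that bypasses this bookkeeping; the argument is a finite but careful case analysis. A secondary but essential point is to keep the \emph{strictness} of the two witness pairs in play throughout, since it is exactly that strictness — not mere directness or twistedness — on which the final contradiction rests.
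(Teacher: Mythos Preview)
Your proposal is a strategy outline, not a proof: you never actually choose a specific word, carry out the expansions, or complete the case analysis you promise. Saying ``the argument is a finite but careful case analysis'' and listing $\varphi(gab)$, $\varphi(acdb)$, $\varphi(abcd)$ as ``natural places to run this'' is not the same as running it. For a result this short the entire content \emph{is} that case analysis; omitting it leaves essentially nothing proved.

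There is also a structural inefficiency that explains why the bookkeeping looks daunting to you. Working with four unrelated elements $a,b,c,d$ makes the case tree explode, because nothing links the pair $(a,b)$ to the pair $(c,d)$: no two-letter subword of $acdb$ or $abcd$ has a predetermined $\varphi$-value except $ab$ and $cd$ themselves. Scott's actual argument---mirrored in the paper's Lemmas~\lemref{subloops}, \lemref{union} and Theorem~\thmref{key}---first reduces to \emph{three} elements sharing one: the sets $A=\{g:\varphi(gx)=\varphi g\cdot\varphi x\ \forall x\}$ and $B=\{g:\varphi(gx)=\varphi x\cdot\varphi g\ \forall x\}$ are subgroups, and if $\varphi$ is proper both are proper, so some $a\notin A\cup B$ carries a strictly direct partner $b$ and a strictly twisted partner $c$. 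With the common letter $a$ present, every two-letter subword of a short product such as $bac$ or $cab$ already has its $\varphi$-value pinned down, and the comparison of bracketings collapses to a few lines. Your four-element setup forfeits this leverage; if you tried to push it through you would end up rediscovering the shared-element reduction the moment you attempted to connect $(a,b)$ to $(c,d)$ via intermediate pairs like $(a,c)$ or $(b,c)$.
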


Scott actually stated and proved Proposition \prpref{scott} in the more general situation where both $G$ and $G'$ are cancellative semigroups. Kuznecov \cite{Kuz} showed that the same conclusion holds if $G'$ is an arbitrary semigroup and $G$ is a semigroup containing a densely imbedded completely simple ideal. Proposition \prpref{scott} eventually became an exercise in Bourbaki's \textit{Algebra I} \cite[{\S}4, Exercise 26, p. 139]{Bo} with the steps in the exercise essentially following Scott's proof.

Scott gave an example of a loop of order 8 which shows that Proposition \prpref{scott} does not directly generalize to loops. It is nevertheless natural to ask if the result generalizes to Moufang loops, since these are highly structured loops that are, in some sense, very close to groups. This question was first addressed by Gagola and Giuliani \cite{GG1} who proved the following.

\begin{proposition}[\cite{GG1}]\prplabel{gg}
Let $Q$, $Q'$ be Moufang loops of odd order. Every half-isomorphism of $Q$ onto $Q'$ is either an isomorphism or an anti-isomorphism.
\end{proposition}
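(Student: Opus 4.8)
The plan is to bootstrap from Scott's Proposition~\prpref{scott} using diassociativity, and then to exploit the odd-order hypothesis (which forces unique square roots and the absence of involutions) to upgrade local consistency to global consistency. \emph{Step 1 (elementary properties).} Since $\varphi(1)=\varphi(1\cdot 1)=\varphi(1)^2$ and a loop has no idempotent other than $1$, $\varphi(1)=1$. For fixed $x$, the subloop $\sbl{x}$ is a cyclic group on which there is only one way to multiply, so $\varphi(x^{k+1})=\varphi(x^k)\varphi(x)$; by induction $\varphi$ is a homomorphism on $\sbl{x}$, hence $\varphi(x\inv)=\varphi(x)\inv$ and, $\varphi$ being injective, $\varphi$ preserves the order of each element. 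As $|Q|=|Q'|$ is odd, every element of $Q$ and of $Q'$ has odd order, and therefore $x\mapsto x^2$ is a bijection of $Q$ (it is injective on each two-generated subgroup of odd order, hence injective, hence bijective by finiteness), and likewise on $Q'$; write $x\mapsto x^{1/2}$ for its inverse, noting $x^{1/2}\in\sbl{x}$ and $\varphi(x^{1/2})=\varphi(x)^{1/2}$.

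\emph{Step 2 (reduction to a constant sign).} Fix $x,y\in Q$. By Moufang's theorem $\sbl{x,y}$ is a group, and since $\varphi$ is a half-homomorphism, $\varphi(\sbl{x,y})\subseteq\sbl{\varphi x,\varphi y}$ is closed under multiplication, hence a subgroup. Thus $\varphi$ restricts to a half-isomorphism between the groups $\sbl{x,y}$ and $\varphi(\sbl{x,y})$, which by Proposition~\prpref{scott} is an isomorphism or an anti-isomorphism. Consequently $\varphi$ is \emph{consistent} on $\sbl{x,y}$ (either $\varphi(uv)=\varphi u\,\varphi v$ for all $u,v\in\sbl{x,y}$, or $\varphi(uv)=\varphi v\,\varphi u$ for all such $u,v$), it satisfies $\varphi(uvu)=\varphi u\,\varphi v\,\varphi u$ in either case, and $\sbl{x,y}$ is abelian iff $\varphi x$ and $\varphi y$ commute. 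Call an unordered pair $\{x,y\}$ \emph{positive} if $\varphi|_{\sbl{x,y}}$ is an isomorphism and \emph{negative} if it is an anti-isomorphism; a non-abelian pair is exactly one of the two, an abelian pair is both. If every non-abelian pair is positive then $\varphi$ is a homomorphism and hence, being bijective, an isomorphism; if every non-abelian pair is negative then $\varphi$ is an anti-isomorphism. So it suffices to show that $\varphi$ cannot admit simultaneously a non-abelian positive pair and a non-abelian negative pair.

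\emph{Step 3 (rigidity).} Suppose for contradiction that $\{a,b\}$ is non-abelian positive and $\{c,d\}$ is non-abelian negative. The goal is to prove that the sign of a non-abelian pair is ``locally constant'': given a non-abelian pair $\{x,y\}$ and an arbitrary $z\in Q$, one substitutes $x,y,z$ — and, where needed, square roots of products of them (available and uniquely determined by Step~1, allowing one to rewrite a ``mixed'' product as a square or as a palindrome $uvu$) — into the Moufang identity \eqnref{Moufang}, and applies the consistency statement of Step~2 on each two-generated subgroup that appears ($\sbl{x,y}$, $\sbl{x,z}$, $\sbl{y,z}$, $\sbl{xy,zx}$, $\sbl{x,yz}$, and so on), eliminating every sign pattern except those in which each non-abelian pair among $\{x,y\},\{x,z\},\{y,z\}$ carries the sign of $\{x,y\}$. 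Iterating this along chains of pairwise non-commuting elements — any two non-central elements of a non-abelian Moufang loop being joined by such a chain, a routine consequence of diassociativity and of the centre being a subloop — forces all non-abelian pairs to share one sign, contradicting the coexistence of $\{a,b\}$ and $\{c,d\}$.

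The main obstacle is Step~3, namely carrying out the elimination of sign patterns: a single use of \eqnref{Moufang} links at least five two-generated subgroups, so one must organize and rule out on the order of $2^5$ sign configurations, and this is exactly where oddness of the order is indispensable. The unique square roots let one rewrite products so that $\varphi$ acts on them without ambiguity (through palindromic expressions, on which $\varphi$ is determined regardless of sign, or through squares), while the absence of involutions excludes the degenerate configurations — already visible in Scott's order-$8$ counterexample for general loops — in which $\varphi u\,\varphi v$ and $\varphi v\,\varphi u$ coincide without $u,v$ commuting. Once the local-constancy lemma is in hand, the ``chain'' statement and the passage to a global isomorphism or anti-isomorphism are straightforward.
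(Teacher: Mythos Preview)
Your Steps 1 and 2 are sound and align with Lemmas \lemref{basic} and \lemref{gg-basic} of the paper. The gap is Step 3, which you yourself flag as ``the main obstacle'' but do not actually carry out. Two concrete problems. First, the local-constancy statement you aim for---that the sign of a non-abelian pair $\{x,y\}$ forces the signs of all non-abelian pairs $\{x,z\}$, $\{y,z\}$---is precisely the heart of the theorem, and ``rule out on the order of $2^5$ sign configurations'' is a description of work, not an argument; you give no indication of which substitutions into \eqnref{Moufang}, or which uses of square roots, eliminate the bad patterns. Second, your chain-connectivity claim (any two non-central elements joined by a non-commuting chain, ``a routine consequence of diassociativity and of the centre being a subloop'') tacitly needs centralizers $C(x)$ to be subloops so that $Q=C(x)\cup C(y)$ can be ruled out; in Moufang loops centralizers are not subloops in general, so this step is not routine and would need a separate argument.

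The paper's route is sharper and rather different in flavor. Instead of global sign propagation, one first uses the subloops $A$, $B$ of Lemma \lemref{subloops} together with Lemma \lemref{union} to produce a \emph{Scott triple} $(a,b,c)$: elements with $\sbl{a,b}$, $\sbl{b,c}$ non-abelian positive and $\sbl{a,c}$ non-abelian negative (Theorem \thmref{key}). The decisive step is Theorem \thmref{woohoo}: in any Scott triple one has $a^2c=ca^2$. This is a single targeted computation with the semi-isomorphism identity $\varphi(uvu)=\varphi u\cdot\varphi v\cdot\varphi u$ and one application of \eqnref{Moufang}, not a $2^5$-case analysis. The odd-order hypothesis then enters in one line: write $a=d^2$; since $\sbl{a,b}\le\sbl{d,b}$ and $\sbl{a,c}\le\sbl{d,c}$, the triple $(d,b,c)$ is again a Scott triple, so Theorem \thmref{woohoo} gives $d^2c=cd^2$, i.e.\ $ac=ca$, contradicting the non-abelianness of $\sbl{a,c}$. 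Thus oddness is used not to exclude involutions or degenerate coincidences $\varphi u\,\varphi v=\varphi v\,\varphi u$, but simply to take a square root and halve the exponent in the relation $a^2c=ca^2$.
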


We will call a half-isomorphism which is neither an isomorphism nor an anti-isomorphism a \emph{proper} half-isomorphism. (In \cite{GG1,GG2}, the word ``nontrivial'' is used instead.)

Gagola and Giuliani also showed that there exist Moufang loops of even order with proper half-automorphisms \cite{GG2}.

The next result in the same vein was by Grishkov \emph{et al}. A loop is \emph{automorphic} if all of its inner mappings are automorphisms \cite{BP, KKPV}.

\begin{proposition}[\cite{GGRS}]\prplabel{ggrs}
Every half-automorphism of a finite automorphic Moufang loop is either an automorphism or an anti-automorphism.
\end{proposition}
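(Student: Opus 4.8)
The plan is to split a half-automorphism into its odd-order and $2$-power-order constituents, dispose of the odd constituent with Proposition~\prpref{gg}, and reduce the statement to the case of a finite automorphic Moufang $2$-loop, which is where the real work lies.

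First I would record the basic features of an arbitrary half-automorphism $\varphi$ of a Moufang loop $Q$. Since the two alternatives in the half-homomorphism condition agree on the pair $(x,x)$, one has $\varphi(x^2)=\varphi(x)^2$, and an induction inside the cyclic subloop $\sbl x$ then gives $\varphi(x^n)=\varphi(x)^n$ for all $n$, so $\varphi$ preserves element orders. Also $\varphi\inv$ is again a half-automorphism, and because Moufang loops have the anti-automorphic inverse property the map $x\mapsto x\inv$ is an anti-automorphism; composing $\varphi$ with it interchanges the properties ``automorphism'' and ``anti-automorphism'', which I will use to normalize later. Finally, if $z\in Z(Q)$ then every pair $\{z,x\}$ commutes, whence $\varphi(z)\varphi(x)=\varphi(zx)=\varphi(x)\varphi(z)$ for all $x$; surjectivity then forces $\varphi(z)\in Z(Q)$, and thus $\varphi(Z(Q))=Z(Q)$.

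Now let $Q$ be a finite automorphic Moufang loop. By the known structure theory, $Q=O\times P$ with $|O|$ odd and $P$ its Sylow $2$-subloop. Since $\varphi$ preserves element orders, $\varphi(O)=O$ and $\varphi(P)=P$; writing $a\in O$ and $b\in P$ as powers of $ab$ (possible since $|a|$ and $|b|$ are coprime) and using $\varphi(x^n)=\varphi(x)^n$ gives $\varphi=\varphi_O\times\varphi_P$, where $\varphi_O$, $\varphi_P$ are half-automorphisms of $O$, $P$. Proposition~\prpref{gg} makes $\varphi_O$ an automorphism or an anti-automorphism of $O$. A short computation with a non-commuting pair in $O$ and one in $P$ shows that when both $O$ and $P$ are non-commutative, $\varphi_O$ and $\varphi_P$ must be of the same type; and if one of the factors is commutative it is an abelian group (a commutative Moufang loop of order prime to $3$ is an abelian group), so the types match automatically. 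Hence $\varphi$ is an automorphism or an anti-automorphism of $Q$ once the same is established for $\varphi_P$, and the problem is reduced to finite automorphic Moufang $2$-loops.

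For such a loop $P$ I would induct on $|P|$. If $P$ is a group, Proposition~\prpref{scott} applies. Otherwise $Z:=Z(P)\neq1$ (finite Moufang $2$-loops are centrally nilpotent), $\varphi(Z)=Z$ by the above, and $\varphi$ induces a half-automorphism $\overline\varphi$ of the smaller automorphic Moufang $2$-loop $P/Z$; after composing $\varphi$ with inversion if necessary, the inductive hypothesis gives that $\overline\varphi$ is an automorphism, i.e. $\varphi(xy)\equiv\varphi(x)\varphi(y)\pmod Z$ for all $x,y$. The remaining, and I expect hardest, step is to promote this congruence to an equality for every pair, equivalently to show that an essential pair on which $\varphi$ behaves homomorphically and one on which it behaves anti-homomorphically cannot coexist. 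This is precisely where the automorphic hypothesis must be decisive: Gagola and Giuliani exhibited finite Moufang loops of even order with proper half-automorphisms, so no argument using only the Moufang law can suffice. The approach I would take is a Scott-style analysis of these ``good'' and ``bad'' pairs via the Moufang identity~\eqnref{Moufang}, using that every inner mapping of $P$ is an automorphism to transport the good/bad status of a pair along $\Inn(P)$-orbits and to control the central defect $\varphi(xy)\bigl(\varphi(x)\varphi(y)\bigr)\inv\in Z$; carrying this out, and locating exactly where automorphicity is indispensable, is the technical core of the argument.
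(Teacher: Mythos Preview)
Your approach diverges substantially from the paper's, and it contains a genuine gap.

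The paper does not prove Proposition~\prpref{ggrs} directly; it obtains it (in fact without the finiteness hypothesis, thereby settling the conjecture of \cite{GGRS}) as an immediate special case of Theorem~\thmref{main}. The single structural ingredient needed is the fact, quoted from \cite[Corollary~2]{KKP}, that for any automorphic Moufang loop $Q$ the quotient $Q/N(Q)$ has exponent~$3$. In a loop of exponent~$3$ squaring is a bijection (it is its own inverse), so the hypothesis of Theorem~\thmref{main} is satisfied and every half-isomorphism of $Q$ onto a Moufang loop is an isomorphism or an anti-isomorphism. No decomposition into odd and $2$-parts, and no induction along a central series, is required.

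Your proposal, by contrast, reduces to the case of a finite automorphic Moufang $2$-loop $P$ and then stops short. The reduction itself looks sound, but you correctly identify that lifting ``$\overline\varphi$ is an automorphism of $P/Z$'' to ``$\varphi$ is an automorphism of $P$'' is the crux, and you only sketch a plan (transporting the homomorphic/anti-homomorphic status of pairs along $\Inn(P)$-orbits and controlling the central defect) without executing it. Since Moufang $2$-loops can carry proper half-automorphisms \cite{GG2}, this step genuinely requires the automorphic hypothesis in a precise way, and absent an actual argument there is no proof. The paper's route sidesteps this difficulty entirely: automorphicity enters only through the exponent-$3$ structure of $Q/N(Q)$, after which the Scott-triple machinery of \S2 and \S3 applies uniformly, with no separate treatment of the $2$-part and no appeal to finiteness.
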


Grishkov \emph{et al} conjectured that the finiteness assumption can be dropped, and that the corresponding result holds for all half-isomorphisms.

Our main result simultaneously generalizes Propositions \prpref{scott}, \prpref{gg} and \prpref{ggrs}, and as a by-product answers the conjecture of \cite{GGRS} in the affirmative.

To state the main result, we first recall that the \emph{nucleus} of a loop $Q$ is defined by
\[
N(Q) = \setof{a\in Q}{ax\cdot y = a\cdot xy,\, xa\cdot y = x\cdot ay,\, xy\cdot a = x\cdot ya,\ \forall x,\,y\in Q}\,.
\]
The nucleus is a subloop of any loop, and if $Q$ is a Moufang loop, then $N(Q)$ is a normal subloop \cite[Theorem 2.1]{Bruck}.

\begin{theorem}\thmlabel{main}
Let $Q$, $Q'$ be Moufang loops, and let $N=N(Q)$ be the nucleus of $Q$. Suppose that in $Q/N$ the squaring map $xN\mapsto x^2 N$ is surjective. Then every half-isomorphism of $Q$ onto $Q'$ is either an isomorphism or an anti-isomorphism.
\end{theorem}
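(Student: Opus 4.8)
\emph{Plan of proof.} Let $\varphi\colon Q\to Q'$ be a half-isomorphism. The plan is to begin with the standard elementary facts: $\varphi(1)=1$, $\varphi(x^{-1})=\varphi(x)^{-1}$, and $\varphi$ restricts to an isomorphism on each cyclic subgroup, so that $\varphi(x^n)=\varphi(x)^n$ for all $n\in\mathbb{Z}$; moreover, since $Q'$ is Moufang and hence flexible with the inverse property, products such as $\varphi(x)\varphi(y)\varphi(x)$ are unambiguous. The first substantive step is to prove that for all $x,y\in Q$ the restriction $\varphi|_{\langle x,y\rangle}$, defined on the subgroup $\langle x,y\rangle$ (a group by diassociativity), is an isomorphism or an anti-isomorphism onto $\langle\varphi(x),\varphi(y)\rangle$; this is a Moufang-loop sharpening of Proposition~\prpref{scott} that also uses Moufang's theorem to guarantee the ambient associativity. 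From it I would extract the identity used constantly below: $\varphi(xyx)=\varphi(x)\varphi(y)\varphi(x)$, valid because $xyx$ coincides with its own reversal, so the two cases of the dichotomy give the same value.

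Next I would establish the rigidity of squares. Inside the group $\langle x,y\rangle$ one has $x^2y=x\,(xyx^{-1})\,x$, so the palindrome identity gives $\varphi(x^2y)=\varphi(x)\,\varphi(xyx^{-1})\,\varphi(x)$; and since $\varphi(xyx^{-1})$ equals $\varphi(x)\varphi(y)\varphi(x)^{-1}$ or $\varphi(x)^{-1}\varphi(y)\varphi(x)$ according as $\varphi|_{\langle x,y\rangle}$ is an isomorphism or an anti-isomorphism, this yields $\varphi(x^2y)=\varphi(x)^2\varphi(y)$ in the first case and $\varphi(x^2y)=\varphi(y)\varphi(x)^2$ in the second, and symmetrically for $yx^2$. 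Thus the way $\varphi$ treats a product involving a square is entirely dictated by a single bit of data attached to the relevant $2$-generated subgroup. In the same vein I would record the analogous facts about the nucleus: $\varphi(N)\subseteq N(Q')$, the restriction $\varphi|_N$ is an isomorphism or an anti-isomorphism (Proposition~\prpref{scott}), and this orientation is coherent along $N$ in the sense that every pair $(n,x)$ and $(x,n)$ with $n\in N$ has the same orientation as $\varphi|_N$. Replacing $\varphi$, if necessary, by $x\mapsto\varphi(x)^{-1}$, which interchanges ``isomorphism'' with ``anti-isomorphism'' everywhere, we may assume $\varphi|_N$ is an isomorphism, so that $\varphi(nx)=\varphi(n)\varphi(x)$ and $\varphi(xn)=\varphi(x)\varphi(n)$ for all $n\in N$ and $x\in Q$.

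Now suppose, for contradiction, that $\varphi$ is neither an isomorphism nor an anti-isomorphism. Let $A$ be the set of $x\in Q$ at which $\varphi$ is a homomorphism (that is, $\varphi(xz)=\varphi(x)\varphi(z)$ and $\varphi(zx)=\varphi(z)\varphi(x)$ for every $z\in Q$) and let $B$ be the set at which it is an anti-homomorphism; then $A\ne Q\ne B$, while $\varphi$ is an isomorphism precisely when $A=Q$ and an anti-isomorphism precisely when $B=Q$. I would then prove: (a) $A$ and $B$ are subloops of $Q$ --- the point being closure under products, where, since $\varphi((x_1x_2)z)$ cannot be rewritten via $\varphi(x_1\cdot x_2z)$ without paying an associator, one must feed the triple through the Moufang identity~\eqnref{Moufang}; (b) $N\subseteq A$, by the previous paragraph; and (c) that every element of $Q$ lies in $A\cup B$. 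For (c) the hypothesis is indispensable: by the rigidity of squares, the orientation at a square $y^2$ is determined by the $2$-generated subgroups through $y$, these can be reconciled modulo $N$ using the coherence of $N$ and~\eqnref{Moufang}, and since squaring is surjective on $Q/N$ every coset of $N$ is represented by a square, which --- together with $N\subseteq A$ and (a) --- pins each $x\in Q$ to $A$ or to $B$. But a Moufang loop is not the union of two proper subloops: picking $a\in A\setminus B$ and $b\in B\setminus A$, if $ab\in A$ then $a^{-1}(ab)=b$ lies in $A$ (the computation being inside the group $\langle a,b\rangle$), contradicting $b\notin A$, and the case $ab\in B$ is symmetric. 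Hence $A=Q$ or $B=Q$, a contradiction, and the theorem follows.

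The combinatorial skeleton of the last paragraph is routine; the weight of the proof lies in the $2$-generated dichotomy of the first paragraph, the nuclear coherence of the second, and steps (a) and (c) of the third --- each of which is a manifestation of the same obstacle, namely upgrading an assertion about a threefold product, where parenthesisation matters, to an assertion about $2$-generated subgroups, where it does not, with the Moufang identity~\eqnref{Moufang} as the only available bridge. This is exactly where Moufang structure rather than mere diassociativity is essential, and it is also where the hypothesis on $Q/N$ genuinely enters: without surjectivity of the squaring map on $Q/N$ the single bit of orientation attached to $2$-generated subgroups need not be globally coherent, and indeed \cite{GG2} produces proper half-automorphisms precisely in that regime. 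I expect step (c) to be the hardest, as the identities one is led to write down threaten to collapse into vacuous commutativity relations unless~\eqnref{Moufang} is applied with exactly the right substitution.
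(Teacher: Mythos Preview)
Your framework matches the paper through step~(a): the subloops $A$ and $B$ and the union argument are exactly Lemmas~\lemref{subloops} and~\lemref{union}. But the paper does \emph{not} prove $Q=A\cup B$; it does the opposite. From $Q\ne A\cup B$ it extracts an element $a\notin A\cup B$ and witnesses $b,c$ forming a \emph{Scott triple}: $\varphi\!\upharpoonright\!\sbl{a,b}$ an isomorphism, $\varphi\!\upharpoonright\!\sbl{a,c}$ an anti-isomorphism, $\varphi\!\upharpoonright\!\sbl{b,c}$ an isomorphism, all three subgroups nonabelian (Theorem~\thmref{key}). The decisive technical lemma is then Theorem~\thmref{woohoo}: for any Scott triple, $\sbl{a^2,c}$ is abelian. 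Only at this point does the hypothesis enter: choose $d$ with $d^2=an$, $n\in N$; show $(d,b,c)$ is again a Scott triple; conclude $\sbl{d^2,c}=\sbl{an,c}$ is abelian; and combine with the separately established commutativities $\sbl{n,c}$ and $\sbl{n,d}$ abelian to force $\sbl{a,c}$ abelian, the contradiction.

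Your plan has two genuine gaps. First, the nuclear claims: neither $\varphi(N)\subseteq N(Q')$ nor ``every pair $(n,x)$ has the same orientation as $\varphi|_N$'' is proved, and the paper neither states nor needs them. The image $\varphi(\sbl{n,x,y})=\sbl{\varphi n,\varphi x,\varphi y}$ is a subloop of $Q'$ but not obviously a group, so Proposition~\prpref{scott} does not apply to give a global orientation at $n$; the paper instead proves only the local commutativities $\sbl{n,d}$, $\sbl{n,c}$ abelian for the \emph{specific} $n$ arising in the endgame (Lemma~\lemref{auxmain}). Second, and more seriously, your step~(c) hides the whole difficulty. Your ``rigidity of squares'' correctly says $\varphi(y^2 z)$ is determined by the orientation of $\sbl{y,z}$, but different $z$ yield different orientations, so $y^2$ is not forced into $A$ or $B$. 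The sentence ``these can be reconciled modulo $N$'' is exactly where Theorem~\thmref{woohoo} is needed: that theorem shows the anti-isomorphism orientation at $(a,c)$ collapses to commutativity at $(a^2,c)$, which is what lets squares behave coherently. Without an argument of that type your step~(c) is a restatement of the theorem, not a proof. Even granting your nuclear claims, note that $d^2\in B$ and $n\in A$ do not give $d^2 n\in A\cup B$ unless $\varphi(n)$ is central in $Q'$, a still stronger unproved assertion.
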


To see why Theorem \thmref{main} generalizes Proposition \prpref{scott}, we note that if $Q$ is a group, then $N(Q) = Q$, and hence the squaring map in $Q/N(Q) = 1$ is trivially surjective. Since the order of any element
of a finite Moufang loop divides the order of the loop, a finite Moufang loop $Q$ of odd order certainly has a surjective (in fact, bijective) squaring map, and hence so does $Q/N(Q)$. Thus Theorem \thmref{main} generalizes Proposition \prpref{gg}. Finally, if $Q$ is an automorphic Moufang loop, then $Q/N(Q)$ has exponent $3$ by \cite[Corollary 2]{KKP}, and so Theorem \thmref{main} generalizes Proposition \prpref{ggrs} as well.

\medskip

We conclude this introduction with some motivational remarks. Scott's original result might seem at first to be a curiosity, but there is interest in it centered around the
result of Formanek and Sibley \cite{FS} that the group determinant determines a group. A shorter and more constructive proof of \cite{FS} was later given by Mansfield \cite{Man}. Hoehnke and Johnson generalized this to show that the $1$-, $2$-, and $3$-characters of a group determine the group \cite{HJ}. A more explicit use of the fact that group half-isomorphisms are either isomorphisms or anti-isomorphisms can be found in \cite{HJ}, which cites the aforementioned exercise in (the 1970 French edition of) \cite{Bo}.

Loops have determinants as well \cite{J}, and all the results on half-isomorphisms of Moufang loops are motivated
by the following open question: \emph{Let} $\mathcal{M}$ \emph{be a class of Moufang loops in which every half-isomorphism
is an isomorphism or an anti-isomorphism. Does the loop determinant of a loop in} $\mathcal{M}$ \emph{determine the loop?}

\section{Proper Half-isomorphisms}

Our goal in this section is Theorem \thmref{key}, which describes necessary conditions for the existence of a
proper half-isomorphism between Moufang loops. We start by expanding upon a lemma of Scott.

\begin{lemma}\lemlabel{basic}
Let $\varphi:Q\to Q'$ be a half-isomorphism of diassociative loops and let $x$, $y\in Q$.
\begin{enumerate}
\item[(i)] If $xy=yx$, then $\varphi x\cdot \varphi y = \varphi y\cdot\varphi x$.
\item[(ii)] If $\varphi(xy)=\varphi x\cdot \varphi y$, then $\varphi(yx) = \varphi y\cdot \varphi x$.
\item[(iii)] $\varphi(1)=1$ and $(\varphi x)^{-1} = \varphi(x^{-1})$.
\item[(iv)] If $\emptyset\ne X\subseteq Q$, then $\varphi(\langle X\rangle) = \langle \varphi x\,|\,x\in X\rangle$.
\end{enumerate}
\end{lemma}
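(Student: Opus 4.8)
The plan is to prove the four items in the order (iii), (iv), (i), (ii), since (iv) and (i) each rely on earlier items. Two auxiliary facts will be used throughout: first, $Q'$ is diassociative, so it has the inverse property (one‑sided inverses are genuine inverses) and every two‑generated subloop of $Q'$ is a group; second, $\varphi\inv:Q'\to Q$ is again a half‑isomorphism (immediate from the definition: if $\varphi(xy)=\varphi x\cdot\varphi y$ then $\varphi\inv$ preserves the order on the pair $(\varphi x,\varphi y)$, and symmetrically in the reversed case). Item (iii) is then quick: applying the half‑homomorphism property to $1\cdot 1$ gives $\varphi(1)=\varphi(1)\varphi(1)$, so $\varphi(1)=1$ by cancellation; and one of $\varphi(x)\varphi(x\inv)$, $\varphi(x\inv)\varphi(x)$ equals $\varphi(xx\inv)=1$, whence $\varphi(x\inv)=(\varphi x)\inv$ because a one‑sided inverse in $Q'$ is the two‑sided inverse.

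For (iv), the inclusion $\varphi(\langle X\rangle)\subseteq\langle \varphi x\mid x\in X\rangle$ follows by induction on word length: $\varphi$ sends a product to a product of $\varphi$‑images in one of the two orders, and sends inverses to inverses by (iii), so the image of any word in $X\cup X\inv$ lies in $\langle \varphi x\mid x\in X\rangle$. The reverse inclusion I would obtain for free by applying this inclusion to the half‑isomorphism $\varphi\inv$ with generating set $\varphi(X)$: it yields $\varphi\inv(\langle \varphi x\mid x\in X\rangle)\subseteq\langle X\rangle$, equivalently $\langle \varphi x\mid x\in X\rangle\subseteq\varphi(\langle X\rangle)$. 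Running (iv) through $\varphi\inv$ in this way is what keeps it independent of (i) and (ii).

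Item (i) is the step I expect to be the main obstacle. Chasing the two possibilities for $\varphi$ directly on products of powers of $x$ and $y$ runs into stubborn ``escape'' cases that never quite close, so instead I would reduce to Scott's theorem. If $xy=yx$, then $A:=\langle x,y\rangle$ is a group by diassociativity, and it is abelian since its generators commute. By (iv), $\varphi(A)=\langle\varphi x,\varphi y\rangle$ is a subloop of $Q'$, hence a group, and $\varphi|_A$ is a bijective half‑homomorphism onto it, i.e.\ a half‑isomorphism of groups. By Proposition \prpref{scott} it is an isomorphism or an anti‑isomorphism; since $A$ is abelian, in either case $\varphi(xy)=\varphi(yx)$ forces $\varphi x\cdot\varphi y=\varphi y\cdot\varphi x$.

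Finally (ii): suppose $\varphi(xy)=\varphi x\cdot\varphi y$ but $\varphi(yx)\ne\varphi y\cdot\varphi x$. Then the half‑homomorphism property forces $\varphi(yx)=\varphi x\cdot\varphi y=\varphi(xy)$, and since $\varphi$ is a bijection this gives $yx=xy$; but then (i) already yields $\varphi x\cdot\varphi y=\varphi y\cdot\varphi x$, contradicting the assumption, so in fact $\varphi(yx)=\varphi y\cdot\varphi x$. In summary, the only genuinely delicate point is (i): everything hinges on recognizing that inside a diassociative loop the relevant two‑generated substructures are honest groups, so that Scott's group‑theoretic result can be imported, and (iv) — proved via $\varphi\inv$ to avoid circularity — is exactly what licenses this reduction.
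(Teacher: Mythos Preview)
Your argument has a circularity hidden in the claim that $\varphi\inv$ is a half-isomorphism. The parenthetical justification only shows: if $\varphi(xy)=\varphi x\cdot\varphi y$ then $\varphi\inv(\varphi x\cdot\varphi y)=xy$, and if $\varphi(xy)=\varphi y\cdot\varphi x$ then $\varphi\inv(\varphi y\cdot\varphi x)=xy$. But given $u,v\in Q'$ with $x=\varphi\inv u$, $y=\varphi\inv v$, you must show $\varphi\inv(uv)\in\{xy,yx\}$. The bad case is when $\varphi(xy)=vu$ \emph{and} $\varphi(yx)=vu$: then injectivity gives $xy=yx$, yet nothing you have proved so far forces $uv=vu$, so $uv$ need not equal $\varphi(xy)$ or $\varphi(yx)$. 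Ruling out this case is precisely statement (i). Thus your chain is: the reverse inclusion in (iv) uses that $\varphi\inv$ is a half-isomorphism, which needs (i), which in your argument needs the full (iv) to know that $\varphi(\langle x,y\rangle)=\langle\varphi x,\varphi y\rangle$ is a group so that Proposition \prpref{scott} applies.

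The paper avoids this by proving (i) and (ii) first, directly, simply observing that Scott's original short arguments for groups carry over verbatim to diassociative loops; (iii) then follows (the paper uses (i), though your inverse-property argument for (iii) is fine too), and the reverse inclusion in (iv) is obtained from (ii) rather than from $\varphi\inv$: given $w=\varphi u\cdot\varphi v$, either $w=\varphi(vu)$, or $\varphi(vu)=\varphi v\cdot\varphi u$ and then (ii) gives $w=\varphi(uv)$. If you want to keep your ordering, you would need an independent proof of (i) that does not invoke (iv); the cleanest route is exactly Scott's direct computation.
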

\begin{proof}
For (i) and (ii), the proofs of steps (1) and (2) in \cite[Theorem 1]{Scott} go through word for word in the setting of diassociative loops. For (iii), note that $\varphi 1=\varphi(1\cdot 1) = \varphi 1\cdot \varphi 1$, so $\varphi 1 = 1$. Then $1 = \varphi 1 = \varphi( xx^{-1}) = \varphi x\cdot \varphi(x^{-1})$ by (i), so $\varphi(x^{-1}) = (\varphi x)^{-1}$.

Since left and right divisions can be expressed in terms of multiplication and inverses in diassociative loops, every element of $\langle X\rangle$ is a word $w$ involving only multiplications and inverses of elements from $X$, parenthesized in some way. Since $(\varphi x)^{-1} = \varphi(x^{-1})$ by (iii), we can assume that $X=X^{-1}$ and that no inverses occur in $w$. Suppose that $w$ has leaves $x_1$, $\dots$, $x_n\in X$, possibly with repetitions. Applying $\varphi$ to $w$ yields a term with leaves $\varphi(x_1)$, $\dots$, $\varphi(x_n)$ is some order. Therefore $\varphi(\langle X\rangle) \subseteq \langle \varphi x\,|\,x\in X\rangle$.

For the converse, consider a word $w$ in $\varphi(x_1)$, $\dots$, $\varphi(x_n)$. We prove by induction on the height of $w$ that $w\in \varphi(\langle X\rangle)$. If $w=\varphi x$, there is nothing to prove. Suppose that $w=\varphi u\cdot \varphi v$ for some $u$, $v\in\langle X\rangle$. If $w = \varphi(vu)$, we are done. Otherwise $\varphi(vu) = \varphi v\cdot \varphi u$, and (ii) implies $w = \varphi(uv)$.
\end{proof}

We can now generalize \cite[Lemma 3]{GG1}.

\begin{lemma}\lemlabel{gg-basic}
Let $\varphi : Q \to Q'$ be a half-isomorphism of diassociative loops. For all $a$, $b\in Q$:
\begin{enumerate}
\item[(i)] If $\varphi (ab) = \varphi a\cdot \varphi b$, then $\varphi\upharpoonright \sbl{a,b}$ is an isomorphism of groups.
\item[(ii)] If $\varphi (ab) = \varphi b\cdot \varphi a$, then $\varphi\upharpoonright \sbl{a,b}$ is an anti-isomorphism of groups.
\end{enumerate}
\end{lemma}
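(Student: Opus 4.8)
The plan is to reduce the statement directly to Scott's theorem (Proposition~\prpref{scott}) by restricting $\varphi$ to the two-generated subloop $\sbl{a,b}$, and then to use the given hypothesis to decide which of Scott's two alternatives occurs. First I would observe that, since $Q$ and $Q'$ are diassociative, both $\sbl{a,b}$ and $\sbl{\varphi a,\varphi b}$ are groups, and by Lemma~\lemref{basic}(iv) we have $\varphi(\sbl{a,b}) = \sbl{\varphi a,\varphi b}$. Therefore the restriction $\varphi\upharpoonright\sbl{a,b}\colon\sbl{a,b}\to\sbl{\varphi a,\varphi b}$ is a bijective half-homomorphism of one group onto another, i.e.\ a half-isomorphism of groups, so Proposition~\prpref{scott} applies and tells us it is an isomorphism or an anti-isomorphism.

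To prove (i), suppose $\varphi(ab) = \varphi a\cdot\varphi b$. If $\varphi\upharpoonright\sbl{a,b}$ is an anti-isomorphism, then $\varphi b\cdot\varphi a = \varphi(ab) = \varphi a\cdot\varphi b$, so $\varphi a$ and $\varphi b$ commute and hence the group $\sbl{\varphi a,\varphi b}$ is abelian; but an anti-isomorphism onto an abelian group is automatically an isomorphism (and forces its domain to be abelian as well), so in either case $\varphi\upharpoonright\sbl{a,b}$ is an isomorphism. Part (ii) is the mirror image: if $\varphi(ab) = \varphi b\cdot\varphi a$ and $\varphi\upharpoonright\sbl{a,b}$ is an isomorphism, the same computation shows $\varphi a$ and $\varphi b$ commute, so the image is abelian and the isomorphism is simultaneously an anti-isomorphism; either way $\varphi\upharpoonright\sbl{a,b}$ is an anti-isomorphism.

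There is no serious obstacle here once Proposition~\prpref{scott} is in hand; the two points that need care are, first, verifying that $\varphi\upharpoonright\sbl{a,b}$ really does map \emph{onto} the group $\sbl{\varphi a,\varphi b}$ — which is precisely why Lemma~\lemref{basic}(iv) was established beforehand — and, second, remembering that Scott's two conclusions are not mutually exclusive, coinciding exactly when the groups in question are abelian. For this reason one cannot finish part (i) by simply ruling out the anti-isomorphism case; instead the hypothesis $\varphi(ab)=\varphi a\cdot\varphi b$ is used to show that the only way the restriction can be an anti-isomorphism is for it to be an isomorphism as well.
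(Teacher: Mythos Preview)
Your proposal is correct and follows essentially the same argument as the paper: restrict $\varphi$ to $\sbl{a,b}$, use Lemma~\lemref{basic}(iv) to see it is a half-isomorphism of groups, apply Proposition~\prpref{scott}, and then use the hypothesis to force the abelian case when the ``wrong'' alternative arises. Your explicit remarks about surjectivity and the non-exclusivity of Scott's two conclusions are helpful and match the paper's reasoning exactly.
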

\begin{proof}
By Lemma \lemref{basic}, $\varphi(\sbl{a,b}) = \sbl{\varphi a,\varphi b}$, so $\psi = \varphi\upharpoonright \sbl{a,b}$ is a half-isomorphism of groups. By Proposition \prpref{scott}, $\psi$ is either an isomorphism or an anti-isomorphism. Suppose that $\varphi(ab) = \varphi a \cdot \varphi b$. If $\psi$ is an isomorphism, we are done. If $\psi$ is an anti-isomorphism then $\varphi b\cdot \varphi a  = \varphi(ab) = \varphi a\cdot \varphi b$, thus $\sbl{\varphi a,\varphi b}$ is commutative, and $\psi$ is also an isomorphism. Part (ii) follows similarly.
\end{proof}

A \emph{semi-homomorphism} $\varphi : Q\to Q'$ of diassociative loops is a mapping satisfying $\varphi(1) = 1$
and
\[
\varphi(xyx) = \varphi x\cdot \varphi y\cdot \varphi x
\]
for all $x$, $y\in Q$. From Lemma \lemref{gg-basic}, we immediately obtain a generalization of \cite[Corollary 4]{GG1}:

\begin{corollary}\corlabel{semi}
Every half-isomorphism of diassociative loops is a semi-isomorphism.
\end{corollary}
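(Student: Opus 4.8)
The plan is to derive the statement directly from Lemma~\lemref{gg-basic}, with essentially no extra work. Fix a half-isomorphism $\varphi : Q\to Q'$ of diassociative loops and arbitrary $x$, $y\in Q$. A semi-isomorphism is a bijective mapping with $\varphi(1)=1$ and $\varphi(xyx)=\varphi x\cdot\varphi y\cdot\varphi x$ for all $x$, $y$, and since $\varphi$ is already a bijection, only these two identities need to be checked. The first, $\varphi(1)=1$, is part of Lemma~\lemref{basic}(iii), so the whole content is the displayed identity.

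Because $\varphi$ is a half-homomorphism, exactly one of the alternatives $\varphi(xy)=\varphi x\cdot\varphi y$ or $\varphi(xy)=\varphi y\cdot\varphi x$ holds. In the first case, Lemma~\lemref{gg-basic}(i) tells us that $\varphi\upharpoonright\sbl{x,y}$ is an isomorphism of groups; since $xyx\in\sbl{x,y}$ and $\sbl{\varphi x,\varphi y}$ is associative, applying this group isomorphism to $x\cdot y\cdot x$ immediately gives $\varphi(xyx)=\varphi x\cdot\varphi y\cdot\varphi x$. In the second case, Lemma~\lemref{gg-basic}(ii) tells us that $\varphi\upharpoonright\sbl{x,y}$ is an anti-isomorphism of groups, so $\varphi(xyx)$ equals the reversed product $\varphi x\cdot\varphi y\cdot\varphi x$ — but that is the same element, because the word $xyx$ is a palindrome. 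In either case the required identity holds, so $\varphi$ is a semi-isomorphism.

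I do not expect any real obstacle here: the corollary is immediate from the two preceding lemmas. The only point worth highlighting is that $xyx$ is invariant under reversal, which is precisely why the isomorphism and anti-isomorphism branches of Lemma~\lemref{gg-basic} collapse to the same formula, and hence why a half-isomorphism — which on $\sbl{x,y}$ is forced to be one or the other — must respect the symmetric expression $xyx$ even though it need not respect a general product.
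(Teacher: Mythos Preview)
Your argument is correct and is exactly the approach the paper has in mind: the corollary is stated as an immediate consequence of Lemma~\lemref{gg-basic}, and you have simply spelled out the one-line reason, namely that $xyx$ is a palindrome so both the isomorphism and anti-isomorphism cases yield $\varphi x\cdot\varphi y\cdot\varphi x$. One trivial wording nit: the two alternatives for $\varphi(xy)$ are not mutually exclusive (both hold when $\varphi x$ and $\varphi y$ commute), so ``at least one'' would be more accurate than ``exactly one'', though this has no bearing on the proof.
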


From now on, we will use diassociativity, the Moufang identity \eqnref{Moufang} and Corollary \corref{semi} without explicit reference.

\begin{lemma}
\lemlabel{subloops}
Let $\varphi : Q\to Q'$ be a half-isomorphism of Moufang loops $Q$, $Q'$, and let
\begin{align*}
    A &= \setof{a\in Q}{\varphi (ax) = \varphi a\cdot \varphi x\ \forall x\in Q}\,, \\
    B &= \setof{a\in Q}{\varphi (ax) = \varphi x\cdot \varphi a\ \forall x\in Q}\,.
\end{align*}
Then $A$ and $B$ are subloops of $Q$.
\end{lemma}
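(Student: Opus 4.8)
The plan is to check, for each of the sets $A$ and $B$, that it contains $1$, is closed under inversion, and is closed under multiplication; in a Moufang loop a nonempty subset with these three properties is automatically a subloop, since both divisions are recovered from multiplication and inversion ($a\backslash b = a\inv b$, $b/a = ba\inv$ by the inverse properties). That $1\in A$ is immediate from $\varphi(1)=1$. For closure of $A$ under inversion, fix $a\in A$ and an arbitrary $t\in Q$ and write $t=as$ with $s=a\inv t$ (using $a(a\inv t)=t$). Applying $\varphi$ and $a\in A$ gives $\varphi t=\varphi a\cdot\varphi s$, hence $\varphi s=(\varphi a)\inv\varphi t=\varphi(a\inv)\varphi t$ by Lemma \lemref{basic}(iii); that is, $\varphi(a\inv t)=\varphi(a\inv)\varphi t$ for all $t$, so $a\inv\in A$.

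The substantive point is closure of $A$ under products. First I would record the Moufang-loop identity
\[
(ab)t = a\big((b\cdot ta\inv)a\big),
\]
obtained from \eqnref{Moufang} via $(x,y,z)\mapsto(a,b,ta\inv)$ together with $ta\inv\cdot a=t$. Now take $a,b\in A$ and push $\varphi$ through the right-hand side one factor at a time: for $a\in A$ one has $\varphi(au)=\varphi a\cdot\varphi u$ by definition and $\varphi(ua)=\varphi u\cdot\varphi a$ by Lemma \lemref{basic}(ii), so $\varphi a$ may be peeled off on either side; $b\in A$ removes $\varphi b$ on the left; and $a\inv\in A$ (just proved) handles $ta\inv$. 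Writing $p=\varphi a$, $q=\varphi b$, $r=\varphi t$, this yields
\[
\varphi((ab)t) = p\big(\,(q(rp\inv))\,p\,\big).
\]
Putting $v=rp\inv$, so that $r=vp$, and applying \eqnref{Moufang} once more in $Q'$ with $(x,y,z)\mapsto(p,q,v)$ collapses the right-hand side: $p\big((qv)p\big)=(pq)(vp)=(pq)r$. Since $\varphi(ab)=\varphi a\cdot\varphi b$ (again from $a\in A$, with $x=b$), we conclude $\varphi((ab)t)=\varphi(ab)\cdot\varphi t$ for all $t$, i.e. $ab\in A$. Hence $A$ is a subloop.

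For $B$, rather than repeat the bookkeeping I would pass to the opposite loop: $(Q')^{\mathrm{op}}$ (same underlying set, reversed product) is again a Moufang loop, $\varphi\colon Q\to (Q')^{\mathrm{op}}$ is again a half-isomorphism, and the set $B$ attached to $\varphi\colon Q\to Q'$ is literally the set $A$ attached to $\varphi\colon Q\to (Q')^{\mathrm{op}}$. Thus $B$ is a subloop by the case already treated. (A direct computation paralleling the one above works equally well: the same pair of applications of \eqnref{Moufang} goes through after reversing the relevant products, and the one new ingredient is that $a\in B$ also gives $\varphi(ua)=\varphi a\cdot\varphi u$, which follows from Lemma \lemref{basic}(i),(ii) and injectivity of $\varphi$.)

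The only place where any real choice is made — and hence the main obstacle — is the second paragraph: one must decompose $(ab)t$ (here as $a\big((b\cdot ta\inv)a\big)$) so that \emph{every} factor pulled through $\varphi$ is governed by membership in $A$ or $A\inv$, and so that the resulting word in $Q'$ again falls within the scope of a single Moufang identity. Keeping track of which factors $\varphi$ treats homomorphically on the left and which on the right (via Lemma \lemref{basic}(ii)) is the part most prone to slips.
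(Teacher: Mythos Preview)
Your proof is correct and follows essentially the same route as the paper: both use the decomposition $(ab)x = a\,(b\cdot xa\inv)\,a$ coming from \eqnref{Moufang}, push $\varphi$ through, and then reassemble with a second application of \eqnref{Moufang} in $Q'$. The minor differences are organizational: the paper invokes Corollary \corref{semi} (the semi-isomorphism property) to strip both outer $a$'s at once, leaving $\varphi(xa\inv)$ unevaluated until the end, whereas you peel the factors one at a time and compute $\varphi(ta\inv)=r p\inv$ up front; and for $B$ the paper just reruns the computation with the orders reversed, while your opposite-loop observation is a clean way to avoid that repetition.
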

\begin{proof}
By Lemma \lemref{gg-basic}, it is clear that both $A$ and $B$ are closed under taking inverses $a\mapsto a\inv$. Thus it remains to show that both $A$ and $B$ are closed under multiplication. Fix $a$, $b\in Q$. Then for all $x\in Q$,
\begin{equation}\eqnlabel{auxsubloops}
    \varphi (ab\cdot x) = \varphi (ab\cdot (xa\inv\cdot a)) = \varphi (a (b\cdot xa\inv) a) = \varphi a\cdot \varphi (b\cdot xa\inv) \cdot \varphi a\,.
\end{equation}
If $a$, $b\in A$, then \eqnref{auxsubloops} yields
\begin{displaymath}
    \varphi (ab\cdot x) = \varphi a\cdot (\varphi b\cdot \varphi (xa\inv))\cdot \varphi a
    = (\varphi a\cdot \varphi b) (\varphi (xa\inv)\cdot \varphi a)
    = \varphi (ab)\cdot \varphi(xa\inv\cdot a)
    = \varphi (ab)\cdot \varphi x\,.
\end{displaymath}
On the other hand, if $a$, $b\in B$, then \eqnref{auxsubloops} gives
\begin{displaymath}
    \varphi (ab\cdot x) = \varphi a\cdot (\varphi (xa\inv)\cdot \varphi b)\cdot \varphi a
    = (\varphi a\cdot \varphi(xa\inv))\cdot (\varphi b\cdot \varphi a)
    = \varphi (xa\inv\cdot a)\cdot \varphi (ab)
    = \varphi x\cdot \varphi (ab)\,.
\end{displaymath}
\end{proof}

\begin{lemma}
\lemlabel{union}
No loop is the union of two proper subloops.
\end{lemma}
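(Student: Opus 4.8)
The plan is to adapt the classical argument for groups, with one small adjustment to account for the fact that a subloop is closed under the two division operations rather than under an inverse. Suppose, for a contradiction, that $Q = A \cup B$ where $A$ and $B$ are proper subloops. Since $A \ne Q$, we may pick $b \in Q \setminus A$; then necessarily $b \in B$. Symmetrically, pick $a \in Q \setminus B$, so that $a \in A$. Now consider the product $ab$, which lies in $A$ or in $B$, and derive a contradiction in each case.

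First I would treat the case $ab \in A$. Since $A$ is a subloop and $a, ab \in A$, the equation $a \cdot x = ab$ has a solution within $A$; but in $Q$ this equation has the unique solution $x = b$, so $b \in A$, contradicting the choice of $b$. The case $ab \in B$ is handled symmetrically: since $B$ is a subloop containing both $b$ and $ab$, the equation $y \cdot b = ab$ is solvable in $B$, and its unique solution in $Q$ is $y = a$, whence $a \in B$, again a contradiction. Either way the assumed decomposition is impossible.

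There is no real obstacle here; the only point requiring (a little) care is that, unlike a subgroup, a subloop is by definition closed under multiplication and under both left and right division, so in place of forming an expression such as "$a^{-1}(ab)$" one must invoke the solvability of $a \cdot x = ab$ \emph{inside} $A$ together with the uniqueness of the solution in $Q$. Note also that no diassociativity or Moufang property is used: the statement holds for arbitrary loops.
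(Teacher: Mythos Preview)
Your argument is correct and is essentially identical to the paper's: both pick $a\in A\setminus B$, $b\in B\setminus A$, consider $ab$, and use closure of a subloop under left (resp.\ right) division to reach a contradiction. The only difference is cosmetic---you spell out the division step via unique solvability of $a\cdot x=ab$ and $y\cdot b=ab$, whereas the paper just says ``$A$ is closed under left division.''
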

\begin{proof}
This is a standard exercise in group theory and the same proof holds here. For a contradiction, suppose that $A$, $B$ are proper subloops of a loop $Q$ with $Q = A\cup B$. Fix $a\in A\backslash B$ and $b\in B\backslash A$. We have $ab\in A$ or $ab\in B$ since $Q= A\cup B$. However, $ab\in A$ implies $b\in A$ since $A$ is closed under left division, and similarly $ab\in B$ implies $a\in B$.
\end{proof}

A version of the following result (without the third part of the conclusion) was proved in \cite[Proposition 5]{GG1}. Their proof used the general finiteness assumption of Proposition \prpref{gg} in an essential way. Our statement and proof make no reference to cardinality.

\begin{theorem}
\thmlabel{key}
Suppose that there is a proper half-isomorphism of Moufang loops $Q$, $Q'$. Then there is a proper half-isomorphism $\varphi : Q\to Q'$ and elements $a$, $b$, $c\in Q$ such that the following properties hold:
\begin{enumerate}
\item[(i)] $\varphi\upharpoonright \sbl{a,b}$ is an isomorphism and $\sbl{a,b}$ is nonabelian,
\item[(ii)] $\varphi\upharpoonright \sbl{a,c}$ is an anti-isomorphism and $\sbl{a,c}$ is nonabelian,
\item[(iii)] $\varphi\upharpoonright \sbl{b,c}$ is an isomorphism and $\sbl{b,c}$ is nonabelian.
\end{enumerate}
\end{theorem}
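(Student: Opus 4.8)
The plan is to use Lemmas~\lemref{subloops} and~\lemref{union} to produce a triple satisfying (i) and (ii), and then to arrange (iii) by a Moufang--identity computation together with, if necessary, a replacement of $\varphi$ by its composite with the inverse map.

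Let $\varphi$ be an arbitrary proper half-isomorphism $Q\to Q'$ and let $A$, $B$ be the subloops of Lemma~\lemref{subloops}. Both are proper: $A=Q$ would say $\varphi$ is a homomorphism, hence (being bijective) an isomorphism, and $B=Q$ would likewise make $\varphi$ an anti-isomorphism. By Lemma~\lemref{union} there is $a\in Q\setminus(A\cup B)$. Since $a\notin B$ there is $b$ with $\varphi(ab)=\varphi a\cdot\varphi b\neq\varphi b\cdot\varphi a$, and then Lemma~\lemref{gg-basic}(i) gives that $\varphi\upharpoonright\sbl{a,b}$ is an isomorphism and that $\sbl{a,b}$ is nonabelian (its image $\sbl{\varphi a,\varphi b}$ is). Symmetrically, $a\notin A$ yields $c$ with $\varphi(ac)=\varphi c\cdot\varphi a\neq\varphi a\cdot\varphi c$ and (ii). I record the elbow room I will need: since $\varphi\upharpoonright\sbl{a,b}$ is an isomorphism, \emph{every} $g\in\sbl{a,b}$ with $ag\neq ga$ satisfies $\varphi(ag)=\varphi a\cdot\varphi g\neq\varphi g\cdot\varphi a$, and dually in $\sbl{a,c}$.

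The heart of the argument is the claim that there are $b'$, $c'\in Q$ with $\varphi(ab')=\varphi a\cdot\varphi b'\neq\varphi b'\cdot\varphi a$, with $\varphi(ac')=\varphi c'\cdot\varphi a\neq\varphi a\cdot\varphi c'$, and with $\sbl{b',c'}$ nonabelian. Suppose not. The elements $b$, $a\inv b$, $aba$ all lie in $\sbl{a,b}$ and fail to commute with $a$, while $c$, $a\inv c$, $aca$ all lie in $\sbl{a,c}$ and fail to commute with $a$; so by the elbow room above and the failure of the claim each of the first three commutes with each of the last three. In particular $bc=cb$, $aba$ commutes with $a\inv c$, and $aca$ commutes with $a\inv b$. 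The Moufang identity in the form $(xy\cdot x)z=x(y\cdot xz)$ together with $a(a\inv c)=c$ gives $a(bc)=(aba)(a\inv c)$, and similarly $a(cb)=(aca)(a\inv b)$; since $bc=cb$ these coincide. Applying $\varphi$, using the three commuting relations just noted together with Corollary~\corref{semi}, Lemma~\lemref{basic} and the fact that $\varphi$ is an (anti-)isomorphism on $\sbl{a,b}$ and on $\sbl{a,c}$, and then simplifying both sides in $Q'$ by the same Moufang identity and by $\varphi a\cdot((\varphi a)\inv\cdot\varphi b)=\varphi b$, one obtains
\[
\varphi a\cdot\bigl(\varphi b\cdot(\varphi a\cdot\varphi c\cdot(\varphi a)\inv)\bigr)=\varphi a\cdot(\varphi c\cdot\varphi b)\,.
\]
Cancelling $\varphi a$, rewriting $\varphi c\cdot\varphi b=\varphi b\cdot\varphi c$ (valid since $bc=cb$), and cancelling $\varphi b$ give $\varphi a\cdot\varphi c\cdot(\varphi a)\inv=\varphi c$, that is, $\varphi a\cdot\varphi c=\varphi c\cdot\varphi a$, contradicting the choice of $c$. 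I expect this computation to be the main obstacle: one has to pick the right way to split $a\cdot bc$ so that, after applying $\varphi$, exactly the products that are under control become order-independent.

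Granting the claim, replace $b$, $c$ by $b'$, $c'$; then (i) and (ii) still hold and $\sbl{b,c}$ is nonabelian. If $\varphi(bc)=\varphi b\cdot\varphi c$, then this is $\neq\varphi c\cdot\varphi b$ and Lemma~\lemref{gg-basic}(i) gives (iii), so $\varphi$ and $a$, $b$, $c$ work. Otherwise $\varphi(bc)=\varphi c\cdot\varphi b\neq\varphi b\cdot\varphi c$ and $\varphi\upharpoonright\sbl{b,c}$ is an anti-isomorphism; here I pass to $\varphi':=\varphi\circ J$, where $J(x)=x\inv$ is the anti-automorphism of $Q$ furnished by the anti-automorphic inverse property of Moufang loops. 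Then $\varphi'$ is again a half-isomorphism of $Q$ onto $Q'$, it is proper because it is an isomorphism (resp.\ anti-isomorphism) exactly when $\varphi$ is an anti-isomorphism (resp.\ isomorphism), and on every subgroup of $Q$ it converts isomorphisms into anti-isomorphisms and vice versa while leaving (non)commutativity untouched. Hence $\varphi'\upharpoonright\sbl{a,c}$ is an isomorphism, $\varphi'\upharpoonright\sbl{a,b}$ an anti-isomorphism, and $\varphi'\upharpoonright\sbl{b,c}$ an isomorphism, all on nonabelian subloops, so $\varphi'$ together with the triple $a$, $c$, $b$ satisfies (i), (ii) and (iii).
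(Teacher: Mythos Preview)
Your proof is correct, but it takes a different route from the paper's, and the comparison is instructive.

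Both arguments begin identically: use Lemmas~\lemref{subloops} and~\lemref{union} to find $a\notin A\cup B$, and then produce $b$, $c$ witnessing (i) and (ii). The divergence is in how (iii) is obtained. You first prove the auxiliary claim that $b$, $c$ can be replaced so that $\sbl{b,c}$ is nonabelian, \emph{without} yet knowing whether $\varphi\upharpoonright\sbl{b,c}$ is an isomorphism or an anti-isomorphism; only afterwards do you invoke $\varphi\circ J$ to fix the isomorphism side. To make your computation go through without control over $\varphi\upharpoonright\sbl{b,c}$, you cleverly arrange that every product to which $\varphi$ must be applied has commuting factors (via the six auxiliary elements $b,\,a\inv b,\,aba,\,c,\,a\inv c,\,aca$ and the nine forced commuting relations), so that the half-isomorphism ambiguity disappears. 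The paper reverses the order: it \emph{first} passes to $\varphi\circ J$ (swapping $b$ and $c$) to assume $\varphi\upharpoonright\sbl{b,c}$ is an isomorphism, and \emph{then} proves $\sbl{b,c}$ is nonabelian by the single short computation
\[
\varphi(cb\cdot ac)=\varphi(c\cdot ba\cdot c)=\varphi c\cdot(\varphi b\cdot\varphi a)\cdot\varphi c=(\varphi c\cdot\varphi b)(\varphi a\cdot\varphi c)=\varphi(cb)\cdot\varphi(ca)\,,
\]
from which $cb\cdot ac=ca\cdot cb$ follows, and commutativity of $\sbl{b,c}$ would then force $ab=ba$. Because the paper already knows $\varphi c\cdot\varphi b=\varphi(cb)$, it needs no auxiliary elements and no ``elbow room'' trick, and the original $b$, $c$ work without replacement. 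Your approach buys independence from the iso/anti-iso status of $\varphi\upharpoonright\sbl{b,c}$ at the cost of a more elaborate setup; the paper's buys brevity by normalizing first.
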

\begin{proof}
Assume $\varphi : Q\to Q'$ is a proper half-isomorphism, and let $A$, $B$ be defined as in Lemma \lemref{subloops}. Since $\varphi$ is proper, both $A$ and $B$ are proper subloops by Lemma \lemref{subloops}. By Lemma \lemref{union}, $Q \neq A\cup B$, so there is an element $a\in Q$ which is in neither subloop.

Since $a\not\in B$, there exists $b\in Q$ such that $\varphi (ab) = \varphi a\cdot \varphi b \neq \varphi b\cdot \varphi a$. By Lemma \lemref{gg-basic}, $\varphi\upharpoonright \sbl{a,b}$ is an isomorphism and $\sbl{a,b}$ is nonabelian, proving (i).

Since $a\not\in A$, there exists
$c\in Q$ such that $\varphi (ac) = \varphi c\cdot \varphi a \neq \varphi a\cdot \varphi c$. By Lemma \lemref{gg-basic}, $\varphi\upharpoonright \sbl{a,c}$ is an anti-isomorphism and $\sbl{a,c}$ is nonabelian, proving (ii).

Let $J : x\mapsto x\inv$ denote the inversion permutation on $Q$. Now, $\varphi\upharpoonright \sbl{b,c}$ is either an isomorphism or an anti-isomorphism by Lemma \lemref{gg-basic}. If the latter case holds,
then $\varphi\circ J$ is still a proper half-isomorphism, $(\varphi\circ J) \upharpoonright \sbl{b,c}$ is an isomorphism, $(\varphi\circ J)\upharpoonright \sbl{a,b}$ is an anti-isomorphism and $(\varphi\circ J)\upharpoonright \sbl{a,c}$ is an isomorphism. In particular, conditions (i) and (ii) hold for $\varphi\circ J$ with the roles of $b$ and $c$ reversed. Thus there is no loss of generality in assuming that
$\varphi\upharpoonright \sbl{b,c}$ is an isomorphism.

Next we compute
\begin{displaymath}
    \varphi(cb\cdot ac) = \varphi(c\cdot ba\cdot c)
    = \varphi c\cdot \varphi (ba)\cdot \varphi c
    = \varphi c\cdot (\varphi b\cdot \varphi a)\cdot \varphi c
    = (\varphi c\cdot \varphi b) (\varphi a\cdot \varphi c)
    = \varphi (cb)\cdot \varphi(ca)\,.
\end{displaymath}
If also $\varphi (cb)\cdot \varphi(ca) = \varphi(cb\cdot ca)$, then $cb\cdot ac = cb\cdot ca$, and so $ac = ca$, a contradiction. Therefore $\varphi(cb\cdot ac) = \varphi(cb)\cdot \varphi(ca) = \varphi(ca\cdot cb)$, and this establishes $cb\cdot ac = ca\cdot cb$. If $\sbl{b,c}$ were an abelian group, then we would have $c\cdot ba\cdot c = cb\cdot ac = ca\cdot cb = ca\cdot bc = c\cdot ab\cdot c$, which implies $ba = ab$, a contradiction. This establishes the remaining claim in (iii) and completes the proof of the theorem.
\end{proof}

Given a proper half-isomorphism $\varphi : Q\to Q'$ of Moufang loops, we will refer to a triple $(a,b,c)$ of elements $a$, $b$, $c\in Q$ satisfying the conditions of Theorem \thmref{key} as a \emph{Scott triple}, since the idea of considering triples satisfying conditions (i) and (ii) of the theorem goes back to Scott's original paper \cite{Scott}.

\begin{theorem}
\thmlabel{woohoo}
Let $\varphi : Q\to Q'$ be a proper half-isomorphism of Moufang loops, and let $(a,b,c)$ be a Scott triple.
Then $\sbl{a^2,c}$ and $\sbl{a,c^2}$ are abelian groups.
\end{theorem}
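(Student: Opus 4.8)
The plan is to reduce the theorem to a single commutator identity in $Q'$ and then extract that identity by evaluating $\varphi$ on two suitably chosen products. First I would note that whenever $(a,b,c)$ is a Scott triple for $\varphi$, so is $(c,b,a)$, since conditions (i)--(iii) of Theorem~\thmref{key} for $(c,b,a)$ are precisely conditions (iii), (ii), (i) for $(a,b,c)$; hence it suffices to show $\sbl{a^2,c}$ is abelian, and applying this to $(c,b,a)$ gives that $\sbl{c^2,a}=\sbl{a,c^2}$ is abelian. Since $a^2\in\sbl{a}\subseteq\sbl{a,c}$ and $\sbl{a,c}$ is a group, $\sbl{a^2,c}$ is a subgroup, abelian iff $a^2c=ca^2$. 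As $\varphi$ is injective and, by Theorem~\thmref{key}(ii), an anti-isomorphism on $\sbl{a,c}$, we have $\varphi(a^2c)=\varphi(c)\varphi(a)^2$ and $\varphi(ca^2)=\varphi(a)^2\varphi(c)$, so $a^2c=ca^2$ is equivalent to $\varphi(a)^2$ commuting with $\varphi(c)$ in $Q'$. Write $p=\varphi a$, $q=\varphi b$, $r=\varphi c$; the goal becomes $p^2r=rp^2$, and recall that $\sbl{p,q}$ and $\sbl{p,r}$ are \emph{nonabelian} groups by Theorem~\thmref{key}.

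Next I would produce an auxiliary relation by evaluating $\varphi$ on $(ab)(ca)$. By \eqnref{Moufang}, $(ab)(ca)=a(bc)a$, so by Corollary~\corref{semi} and the fact that $\varphi$ is an isomorphism on $\sbl{b,c}$, $\varphi\big((ab)(ca)\big)=p\,(qr)\,p$. On the other hand this equals $\varphi(ab)\varphi(ca)$ or $\varphi(ca)\varphi(ab)$, where $\varphi(ab)=pq$ and $\varphi(ca)=pr$ (using the anti-isomorphism on $\sbl{a,c}$). Rewriting $pr=(prp\inv)p$ and applying \eqnref{Moufang}, the first option would give $pq\cdot pr=p\big(q(prp\inv)\big)p$; equating with $p(qr)p$ and cancelling (inverse property of $Q'$) would force $prp\inv=r$, contradicting nonabelianness of $\sbl{p,r}$. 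So $\varphi\big((ab)(ca)\big)=pr\cdot pq=p(qr)p$, and the same rewriting and cancellation give the relation $pqp\inv=r\inv qr$.

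Finally I would run the analogous computation on $(a^2b)(ca^2)$. Two applications of \eqnref{Moufang} together with the alternative laws and diassociativity show $(a^2b)(ca^2)=a^2(bc)a^2$, so $\varphi\big((a^2b)(ca^2)\big)=p^2(qr)p^2$. On the other hand it equals $\varphi(a^2b)\varphi(ca^2)=(p^2q)(p^2r)$ or $\varphi(ca^2)\varphi(a^2b)=(p^2r)(p^2q)$, with $\varphi(a^2b)=p^2q$ (isomorphism on $\sbl{a,b}$) and $\varphi(ca^2)=p^2r$ (anti-isomorphism on $\sbl{a,c}$). Rewriting the right factor as before, now with $p^2$ in the role of $x$ in \eqnref{Moufang}: the first option gives $(p^2q)(p^2r)=p^2\big(q(p^2rp^{-2})\big)p^2$, and equating with $p^2(qr)p^2$ and cancelling forces $p^2rp^{-2}=r$, i.e.\ $p^2r=rp^2$, which is the goal. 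The second option gives $p^2qp^{-2}=r\inv qr$; combined with the auxiliary relation this yields $pqp\inv=p^2qp^{-2}$, hence (conjugating within the group $\sbl{p,q}$) $q=pqp\inv$, i.e.\ $pq=qp$, contradicting nonabelianness of $\sbl{p,q}$. Thus $p^2r=rp^2$ in all cases.

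The step I expect to be most delicate is the pair of Moufang rearrangements of products of the shape $(xy)(x'z)$: in each case the right factor must first be rewritten as $(\text{a conjugate of }z\text{ by a power of }p)\cdot x$, using associativity \emph{inside} the subgroup $\sbl{p,r}$ (respectively $\sbl{p,q}$), so that \eqnref{Moufang} applies verbatim; only afterwards can one cancel using the inverse property of $Q'$. A secondary, routine point is verifying $(a^2b)(ca^2)=a^2(bc)a^2$ in $Q$, which comes down to collapsing all the powers of $a$ by diassociativity after the two applications of \eqnref{Moufang}.
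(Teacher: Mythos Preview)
Your proof is correct. The overall strategy matches the paper's: reduce to $a^2c=ca^2$ via the symmetry $(a,b,c)\leftrightarrow(c,b,a)$, run a two-step computation for $k=1$ and $k=2$ using Corollary~\corref{semi} and the Moufang identity, extract an auxiliary relation from $k=1$, and in the $k=2$ step get the goal in one branch and, via the auxiliary relation, the contradiction $ab=ba$ in the other.

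The route differs in where the Moufang ``sandwich'' is taken. The paper evaluates $\varphi(b\cdot a^kc\cdot b)$, conjugating by $b$ and arriving at products $\varphi(bc)\cdot\varphi(a^kb)$; the resulting auxiliary relation lives in $Q$, namely $ac=b\inv ab\cdot bcb\inv$. You instead evaluate $\varphi(a^k\cdot bc\cdot a^k)=\varphi((a^kb)(ca^k))$, conjugating by $a^k$, and carry out the case analysis entirely in $Q'$; your auxiliary relation $pqp\inv=r\inv qr$ is the image-side counterpart. Each approach has a small advantage: the paper never needs to rewrite a factor as a conjugate before invoking \eqnref{Moufang}, while your version makes the role of the squaring map more transparent (the whole argument is ``conjugate by $p$'' versus ``conjugate by $p^2$'').

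Two minor remarks. First, the identity $(a^2b)(ca^2)=a^2(bc)a^2$ is a single application of \eqnref{Moufang} with $x=a^2$; no iterated use or alternative laws are needed. Second, your cancellation steps of the form $p^k X p^k = p^k Y p^k \Rightarrow X=Y$ are justified simply because $L_{p^k}$ and $R_{p^k}$ are bijections in any loop; you need not invoke the inverse property for this.
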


\begin{proof}
Note that if $(a,b,c)$ is a Scott triple then $(c,b,a)$ is also a Scott triple. It therefore suffices to verify that $\sbl{a^2,c}$ is an abelian group.
For $k\geq 1$, we calculate
\begin{displaymath}
    \varphi(b\cdot a^k c\cdot b)
    = \varphi b\cdot \varphi(a^k c)\cdot \varphi b
    = \varphi b\cdot (\varphi c\cdot \varphi(a^k))\cdot \varphi b
    = (\varphi b\cdot \varphi c)(\varphi(a^k)\cdot \varphi b)
    = \varphi (bc)\cdot \varphi (a^k b)\,,
\end{displaymath}
and so
\begin{equation}\eqnlabel{keytmp0}
    \varphi(b\cdot a^k c\cdot b) = \varphi(bc)\cdot \varphi(a^k b)\,.
\end{equation}

Suppose for a while that $\varphi(bc)\cdot \varphi(ab) = \varphi(bc\cdot ab) = \varphi(b\cdot ca\cdot b)$. Comparing this with \eqnref{keytmp0} for $k=1$ yields $ac=ca$, a contradiction. Thus $\varphi(bc)\cdot \varphi(ab) = \varphi(ab\cdot bc)$, and so $b\cdot ac\cdot b = ab\cdot bc$. Rearranging this gives $ac = b\inv(ab\cdot bc)b\inv = b\inv ab\cdot bcb\inv$, and thus
\begin{equation}\eqnlabel{keytmp2}
    a^2 c = a(b\inv ab\cdot bcb\inv)\,.
\end{equation}

Now suppose for a while that $\varphi(bc)\cdot \varphi(a^2 b) = \varphi(a^2 b\cdot bc)$. Comparing this with \eqnref{keytmp0} for $k=2$ yields $b\cdot a^2 c\cdot b = a^2 b\cdot bc$, and so
\begin{equation}\eqnlabel{keytmp1}
    a^2 c = b\inv (a^2 b\cdot bc) b\inv = b\inv a^2 b\cdot bcb\inv = (b\inv a b)^2 \cdot bcb\inv = b\inv a b\cdot (b\inv ab\cdot bcb\inv)\,.
\end{equation}
Comparing \eqnref{keytmp2} and \eqnref{keytmp1}, we conclude that $b\inv a b = a$, that is, $ab = ba$, a contradiction. Therefore $\varphi(bc)\cdot \varphi(a^2 b) = \varphi(bc\cdot a^2 b)$ and
\[
\varphi(b\cdot a^2 c\cdot b) = \varphi(bc)\cdot \varphi(a^2 b) = \varphi(bc\cdot a^2 b) = \varphi(b\cdot ca^2\cdot b)\,,
\]
which implies $a^2 c = c a^2$.
\end{proof}

\section{Proof of the Main Theorem}

In this section we prove Theorem \thmref{main}. For a contradiction, suppose that $Q$, $Q'$ are Moufang loops, $N=N(Q)$ is the nucleus of $Q$, the squaring map in $Q/N$ is surjective, and let $\varphi : Q\to Q'$ be a proper half-isomorphism. By Theorem \thmref{key}, $Q$ contains a Scott triple $(a,b,c)$. By the assumption on $Q/N$, there is $d\in Q$ such that $d^2N = (d N)^2 = aN$, and so there is also $n\in N$ such that $d^2 = a n$.

Throughout the proof we will use the observation that $\langle n, x, y\rangle$ is a subgroup of $Q$ for any $x$, $y\in Q$, thanks to Moufang's theorem.

\begin{lemma}\lemlabel{auxmain}
In the above situation, $(d,b,c)$ is a Scott triple, and $\sbl{n,d}$ and $\sbl{n,c}$ are abelian groups.
\end{lemma}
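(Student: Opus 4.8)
The plan is to read off the ``orientation'' of $\varphi$ on all the relevant subgroups directly from the given Scott triple $(a,b,c)$, and then transport it to the subgroups containing $d$ via the relation $d^2=an$. The point is that for any $x,y$ the subloop $\sbl{n,x,y}$ is a \emph{group} (Moufang's theorem, since $n\in N$), and one can check its image $\sbl{\varphi n,\varphi x,\varphi y}$ is again a group, so Proposition~\prpref{scott} forces $\varphi\upharpoonright\sbl{n,x,y}$ to be an isomorphism or an anti-isomorphism. Now, for $(x,y)\in\{(a,b),(b,c)\}$ the group $\sbl{n,x,y}$ contains the nonabelian subgroup $\sbl{x,y}$ on which $\varphi$ is a proper isomorphism, so $\varphi\upharpoonright\sbl{n,x,y}$ must itself be an isomorphism; likewise $\varphi\upharpoonright\sbl{n,a,c}$ is an anti-isomorphism. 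Since $d^2=an$ and $\sbl{a,n}$ is a group, $a=(an)n\inv=d^2n\inv\in\sbl{n,d}$, hence $\sbl{a,b}\subseteq\sbl{n,d,b}$ and $\sbl{a,c}\subseteq\sbl{n,d,c}$; as before this forces $\varphi\upharpoonright\sbl{n,d,b}$ to be an isomorphism and $\varphi\upharpoonright\sbl{n,d,c}$ an anti-isomorphism.

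Two abelianness statements now come for free: $\sbl{n,c}\subseteq\sbl{n,b,c}\cap\sbl{n,a,c}$, so $\varphi\upharpoonright\sbl{n,c}$ is simultaneously an isomorphism and an anti-isomorphism, forcing $\sbl{n,c}$ to be abelian; and once we know $(d,b,c)$ is a Scott triple, $\sbl{n,d}\subseteq\sbl{n,d,b}\cap\sbl{n,d,c}$ gives $\sbl{n,d}$ abelian by the same reasoning. So the real work is checking $(d,b,c)$ is a Scott triple. Condition (iii) is inherited from $(a,b,c)$. For (ii): $\varphi\upharpoonright\sbl{d,c}$ is an anti-isomorphism since $\sbl{d,c}\subseteq\sbl{n,d,c}$, and $\sbl{d,c}$ is nonabelian, for otherwise $d^2c=cd^2$, i.e. $(an)c=c(an)$, which together with $nc=cn$ collapses inside the group $\sbl{n,a,c}$ to $ac=ca$, contradicting the nonabelianness of $\sbl{a,c}$. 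For (i), $\varphi\upharpoonright\sbl{d,b}$ is an isomorphism since $\sbl{d,b}\subseteq\sbl{n,d,b}$, so everything reduces to showing $\sbl{d,b}$ is nonabelian.

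I would do this as at the end of the proof of Theorem~\thmref{key}. Using the Moufang identity and the semi-isomorphism property,
\[
\varphi(bd\cdot cb)=\varphi(b(dc)b)=\varphi(b)\cdot\varphi(dc)\cdot\varphi(b)=\varphi(b)(\varphi(c)\varphi(d))\varphi(b)=(\varphi(b)\varphi(c))(\varphi(d)\varphi(b))=\varphi(bc)\cdot\varphi(db),
\]
where one uses $\varphi(dc)=\varphi(c)\varphi(d)$ (anti-isomorphism on $\sbl{d,c}$), a Moufang rebracketing of $\varphi(b)(\varphi(c)\varphi(d))\varphi(b)$ in $Q'$, and the isomorphisms on $\sbl{b,c}$ and $\sbl{d,b}$. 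If $\varphi(bc)\varphi(db)=\varphi(bc\cdot db)$, then $bd\cdot cb=bc\cdot db=b(cd)b$, whence $dc=cd$, contradicting (ii); so $\varphi(bc)\varphi(db)=\varphi(db\cdot bc)$ and $bd\cdot cb=db\cdot bc$. Were $\sbl{d,b}$ abelian, cancelling $db=bd$ on the left would give $cb=bc$, contradicting (iii). Hence $\sbl{d,b}$ is nonabelian, $(d,b,c)$ is a Scott triple, and then $\sbl{n,d}$ (and $\sbl{n,c}$) are abelian as observed above.

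The hard part will be exactly this last display: unlike the rest, which is pure ``containment plus Proposition~\prpref{scott}'' bookkeeping, establishing that $\sbl{d,b}$ is nonabelian needs the Moufang rebracketing in $Q'$ handled carefully, together with the case split on how the half-homomorphism $\varphi$ treats the product $\varphi(bc)\cdot\varphi(db)$ — essentially a rerun of the final paragraph of the proof of Theorem~\thmref{key}. The other point that must not be glossed over is the justification that $\varphi$ restricted to each three-generated subgroup $\sbl{n,x,y}$ is an isomorphism or anti-isomorphism: this is Proposition~\prpref{scott} for a half-isomorphism of groups, so one needs the image $\sbl{\varphi n,\varphi x,\varphi y}$ to be a group, which is where the nuclearity of $n$ (in effect, that $\varphi$ maps $N(Q)$ into $N(Q')$) is used.
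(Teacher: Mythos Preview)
Your approach is essentially the paper's, with two cosmetic differences. First, in showing $\sbl{d,b}$ is nonabelian you sandwich with $b$ (computing $\varphi(bd\cdot cb)=\varphi(b\cdot dc\cdot b)$), whereas the paper sandwiches with $d$ (computing $\varphi(db\cdot cd)=\varphi(d\cdot bc\cdot d)$); both work by the same mechanism. Second, you postpone the conclusion $\sbl{n,d}$ abelian until after the Scott-triple verification, whereas the paper observes it immediately from $\sbl{n,d}\subseteq\sbl{n,d,b}\cap\sbl{n,d,c}$ once those two restrictions are pinned down --- your ordering is fine, but note that you do not actually need the full Scott-triple status of $(d,b,c)$ for this, only the orientations of $\varphi$ on $\sbl{n,d,b}$ and $\sbl{n,d,c}$, which you already have. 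Your argument for $\sbl{d,c}$ nonabelian (via $d^2c=cd^2$ and $nc=cn$ inside the group $\sbl{n,a,c}$) is a slight rephrasing of the paper's (which argues that $\sbl{n,c,d}$ would be abelian), and is equally valid.

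The one point where you go beyond the paper is your final paragraph: you flag that applying Proposition~\prpref{scott} to $\varphi\upharpoonright\sbl{n,x,y}$ requires the image $\sbl{\varphi n,\varphi x,\varphi y}$ to be a group, and suggest this comes from $\varphi(N(Q))\subseteq N(Q')$. The paper does not spell this out either --- it simply asserts ``$\varphi\upharpoonright\sbl{n,d,b}$ is an isomorphism'' from the listed hypotheses. So your concern is legitimate, but it is a concern about the paper's presentation as much as about your own argument. Your proposed resolution (nucleus preservation) is plausible but you have not proved it, and it is not obvious; if you want to close this gap you should either establish that claim or find an alternative route to ``$\varphi$ restricted to a subgroup of $Q$ is an isomorphism or anti-isomorphism onto its image in the Moufang loop $Q'$''.
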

\begin{proof}
Since $\sbl{n,d,b}$ is a group, $\sbl{a,b}$ is nonabelian, $\sbl{a,b}\leq \sbl{n,d,b}$ and $\varphi\upharpoonright \sbl{a,b}$ is an isomorphism, we have that $\varphi\upharpoonright \sbl{n,d,b}$ is an isomorphism, and therefore $\varphi\upharpoonright \sbl{d,b}$ is an isomorphism.

Since $\sbl{n,d,c}$ is a group, $\sbl{a,c}$ is nonabelian, $\sbl{a,c}\leq \sbl{n,d,c}$ and $\varphi\upharpoonright \sbl{a,c}$ is an anti-isomorphism, we have that $\varphi\upharpoonright \sbl{n,d,c}$ is an anti-isomorphism, and therefore $\varphi\upharpoonright \sbl{d,c}$ is an anti-isomorphism.

Since $\sbl{n,b,c}$ is a group, $\sbl{b,c}$ is nonabelian, $\sbl{b,c}\leq \sbl{n,b,c}$ and $\varphi\upharpoonright \sbl{b,c}$ is an isomorphism, we have that $\varphi\upharpoonright \sbl{n,b,c}$ is an isomorphism.

Now $\varphi\upharpoonright \sbl{n,d}$ is both an isomorphism and an anti-isomorphism, and so $\sbl{n,d}$ is an abelian group. Also, $\varphi\upharpoonright \sbl{n,c}$ is both an isomorphism and
an anti-isomorphism, and so $\sbl{n,c}$ is an abelian group.

If $\sbl{c,d}$ were abelian, then from the above it would follow that $\sbl{n,c,d}$ is abelian, contradicting the fact that $\sbl{a,c}$ is nonabelian. Thus
$\sbl{c,d}$ is nonabelian.

Suppose $\sbl{d,b}$ is abelian. We calculate
\begin{displaymath}
    \varphi( db\cdot cd ) = \varphi(d\cdot bc\cdot d)
    = \varphi d\cdot \varphi(bc)\cdot \varphi d
    = \varphi d\cdot (\varphi b\cdot \varphi c)\cdot \varphi d
    = (\varphi d\cdot \varphi b)(\varphi c\cdot \varphi d)
    = \varphi (db)\cdot \varphi(dc)\,.
\end{displaymath}
If $\varphi(db)\cdot \varphi(dc) = \varphi(db\cdot dc)$, then $\varphi(db\cdot cd) = \varphi(db\cdot dc)$, and so
$db\cdot cd = db\cdot dc$. This gives $cd = dc$, a contradiction. On the other hand, if $\varphi(db\cdot cd) = \varphi(dc\cdot db)$, then
$d\cdot bc\cdot d = dc\cdot db = dc\cdot bd = d\cdot cb\cdot b$, so that $bc = cb$, another contradiction. Therefore $\sbl{d,b}$ is nonabelian.
\end{proof}

Let us now finish the proof of Theorem \thmref{main}. By Lemma \lemref{auxmain}, $\sbl{n,c}$ is abelian, $\sbl{n,d}$ is abelian, and $(d,b,c)$ is a Scott triple. By Theorem \thmref{woohoo}, $\sbl{d^2,c} = \sbl{an,c}$ is abelian. Finally, $na = nd^2n^{-1} = d^2 = an$ because $\sbl{n,d}$ is abelian, so $\sbl{n,a} = \sbl{n,an}$ is abelian. Altogether, $\sbl{n,an,c}$ is an abelian group. But then $\sbl{a,c}\le \sbl{n,an,c}$ is abelian, a contradiction with $(a,b,c)$ being a Scott triple.

\section{Remarks and open problems}

In this section we examine hypotheses and generalizations of Theorem \thmref{main}.

The somewhat technical assumption of Theorem \thmref{main} that squaring in $Q/N(Q)$ is surjective can be replaced by the assumption that squaring in $Q/N(Q)$ is bijective. We do not know if these two assumptions are equivalent in Moufang loops.

\begin{problem}
Is there a Moufang loop $Q$ with nucleus $N(Q)$ such that the squaring map in $Q/N(Q)$ is surjective but not injective?
\end{problem}

Scott extended Proposition \prpref{scott} to half-homomorphisms of groups by showing that the kernel of a half-homomorphism is a normal subloop and that the given half-homomorphism factors through the quotient via the
natural homomorphism \cite[Theorem 2]{Scott}.

It is easy to show that the kernel of a half-homomorphism of loops is a subloop:

\begin{lemma}
Let $\varphi:Q\to Q'$ be a half-homomorphism of loops. Then $\mathrm{Ker}(\varphi) = \{a\in Q\,|\,\varphi a =1\}$ is a subloop of $Q$.
\end{lemma}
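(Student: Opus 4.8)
The plan is to show that $\Ker(\varphi)$ is closed under multiplication, left division, and right division; since it manifestly contains $1$ (by Lemma~\lemref{basic}(iii), applied to the half-homomorphism restricted to any two-generated subloop, or directly: $\varphi 1 = \varphi(1\cdot 1)$ forces $\varphi 1 = 1$), this suffices. First I would take $a$, $b\in\Ker(\varphi)$ and compute $\varphi(ab)$: by the half-homomorphism property it equals either $\varphi a\cdot\varphi b$ or $\varphi b\cdot\varphi a$, and in both cases this is $1\cdot 1 = 1$, so $ab\in\Ker(\varphi)$. Thus $\Ker(\varphi)$ is closed under multiplication.

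For closure under division, I would argue as follows. Let $a\in\Ker(\varphi)$ and $b\in\Ker(\varphi)$; I want the unique $x$ with $ax = b$ to lie in $\Ker(\varphi)$, and likewise the unique $y$ with $ya = b$. Apply $\varphi$ to the identity $a\cdot x = b$: we get $\varphi b = 1 = \varphi(ax)$, which is either $\varphi a\cdot\varphi x = \varphi x$ or $\varphi x\cdot\varphi a = \varphi x$; either way $\varphi x = 1$, so $x\in\Ker(\varphi)$. The argument for $y$ is symmetric, using $y\cdot a = b$. Hence $\Ker(\varphi)$ is a subloop.

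The main subtlety — and the only place one must be slightly careful — is that a half-homomorphism need not send inverses to inverses in general (the paper only establishes Lemma~\lemref{basic}(iii) for diassociative loops, and here $Q$ is an arbitrary loop). The division argument above sidesteps this entirely: it never uses inverses, only the defining divisibility property of a loop together with the defining dichotomy of a half-homomorphism. So there is effectively no obstacle; the proof is a three-line verification. One could phrase it even more uniformly by noting that for any $a\in\Ker(\varphi)$, the equation $\varphi(a u) = \varphi u$ holds for every $u\in Q$ (regardless of which branch of the dichotomy occurs), and dually $\varphi(u a) = \varphi u$; the subloop closure properties then follow immediately by substituting the appropriate solutions of $au = b$ and $ua = b$.
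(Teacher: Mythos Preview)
Your proof is correct and follows essentially the same route as the paper's: show closure under multiplication via the dichotomy, then closure under each division by applying $\varphi$ to the defining equation and using $\varphi a=\varphi b=1$. One small remark: your appeal to Lemma~\lemref{basic}(iii) is misplaced, since that lemma is stated for half-\emph{isomorphisms} of \emph{diassociative} loops; but your direct argument that $\varphi 1=\varphi(1\cdot 1)=\varphi 1\cdot\varphi 1$ forces $\varphi 1=1$ (loops have no nontrivial idempotents) is exactly what is needed, and the paper implicitly relies on the same fact.
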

\begin{proof}
Let $K=\mathrm{Ker}(\varphi)$ and $a$, $b\in K$. Then $\varphi(ab)\in\{\varphi a\cdot \varphi b$, $\varphi b\cdot \varphi a\} = \{1\}$, so $ab\in K$. Denote by $a/b$ the unique element of $Q$ such that $(a/b)b = a$. Then $1 = \varphi a = \varphi( (a/b)b)$ is equal to $\varphi(a/b)\cdot \varphi b = \varphi(a/b)$ or to $\varphi b\cdot \varphi(a/b) = \varphi(a/b)$. In either case, $a/b\in K$ follows. Similarly for the left division.
\end{proof}

However, we do not know the answer to the following problem and hence whether Scott's result on kernels can be generalized:

\begin{problem}
Let $\varphi : Q\to Q'$ be a half-homomorphism of a Moufang loop $Q$ into a loop $Q'$. Is the kernel of $\varphi$ necessarily normal in $Q$?\end{problem}

The present paper and all proofs in this context rely rather heavily on the assumption that one is working with Moufang loops. The following problem therefore suggests itself.

\begin{problem}
Investigate half-isomorphisms in other classes of loops, such as Bol loops or automorphic loops.
\end{problem}

For automorphic loops, Grishkov \emph{et al} gave an example with a proper half-automorphism \cite[Example 2]{GGRS}, so any reasonable theorem will have to restrict the hypotheses further, similarly to the case of Moufang loops. The authors are not aware of any work on half-isomorphisms of Bol loops that are not Moufang.

\bigskip

Finally, the following example shows that the target $Q'$ of a half-isomorphism $Q\to Q'$ need not be a diassociative loop even if $Q$ is a group.

\begin{example}
The identity mapping on the set $Q=\{0,\dots,5\}$ is a half-isomorphism $(Q,\cdot)\to (Q,*)$ of the respective loops
\begin{displaymath}
\begin{array}{c|cccccc}
    \cdot & 0 & 1 & 2 & 3 & 4 & 5 \\
    \hline
    0 & 0 & 1 & 2 & 3 & 4 & 5\\
    1 & 1 & 2 & 0 & 5 & 3 & 4\\
    2 & 2 & 0 & 1 & 4 & 5 & 3\\
    3 & 3 & 4 & 5 & 0 & 1 & 2\\
    4 & 4 & 5 & 3 & 2 & 0 & 1\\
    5 & 5 & 3 & 4 & 1 & 2 & 0
\end{array}
\qquad
\begin{array}{c|cccccc}
    * & 0 & 1 & 2 & 3 & 4 & 5 \\
    \hline
    0 & 0 & 1 & 2 & 3 & 4 & 5\\
    1 & 1 & 2 & 0 & 4 & 5 & 3\\
    2 & 2 & 0 & 1 & 5 & 3 & 4\\
    3 & 3 & 5 & 4 & 0 & 1 & 2\\
    4 & 4 & 3 & 5 & 2 & 0 & 1\\
    5 & 5 & 4 & 3 & 1 & 2 & 0
\end{array}
\quad .
\end{displaymath}
Here, $(Q,\cdot)$ is the symmetric group $S_3$, and $(Q,*)$ is an automorphic loop that is not diassociative, as witnessed by $3*(3*1) = 3*5 = 2\ne 1 = 0*1 = (3*3)*1$.
\end{example}


\begin{acknowledgment}
Our investigations were aided by the automated deduction tool \textsc{Prover9} developed by McCune \cite{McCune}.
\end{acknowledgment}


\end{document}